\def \ce{\centering}
\def \R{\mathbb{R}}
\def \P{\mathbb{P}}
\def \N{\mathbb{N}}
\def \E{\mathbb{E}}
\def \D{\mathbb{D}}
\def \T{\mathbb{T}}
\def \H{\mathbb{H}}
\def \L{\mathbb{L}}
\def \C{\mathbb{C}}
\def \Z{\mathbb{Z}}
\def \bf{\textbf}
\def \it{\textit}
\def \sc{\textsc}
\def \bop {\noindent\textbf{Proof }}
\def \eop {\hbox{}\nobreak\hfill
\vrule width 2mm height 2mm depth 0mm
\par \goodbreak \smallskip}
\def \ni {\noindent}
\def \sni {\ss\ni}
\def \bni {\bigskip\ni}
\def \ss {\smallskip}
\def \F{\mathcal{F}}
\def \g{\mathcal{g}}
\def \K{\mathcal{K}}
\begin{document}

\title[  Controllability  results   ]
{Controllability of stochastic impulsive neutral  functional
differential equations driven by fractional Brownian motion  with
infinite delay}

\author[   Lakhel    ]{  El Hassan  Lakhel  }
\maketitle
\begin{center}{ Cadi Ayyad University,  National School of
Applied Sciences,  46000 Safi, Morocco}\end{center}

 \vspace{0.8cm}
\noindent \textbf{Abstract} In this paper we study the
controllability results of    impulsive neutral stochastic
functional differential equations with infinite delay driven
  by    fractional Brownian motion in a real separable Hilbert space.  The controllability results are obtained using stochastic analysis and  a fixed-point strategy. Finally, an illustrative example
is provided to demonstrate the effectiveness of the theoretical
result.\\



\noindent \textbf{Keywords}: Controllability,  impulsive neutral
functional differential equations, fractional powers of closed
operators, infinite delay, fractional Brownian motion.

\vspace{.08in} \noindent {\textbf AMS Subject Classification:}
35R10, 93B05  60G22, 60H20.

\def \ce{\centering}
\def \R{\mathbb{R}}
\def \P{\mathbb{P}}
\def \N{\mathbb{N}}
\def \E{\mathbb{E}}
\def \D{\mathbb{D}}
\def \T{\mathbb{T}}
\def \H{\mathbb{H}}
\def \L{\mathbb{L}}
\def \C{\mathbb{C}}
\def \Z{\mathbb{Z}}
\def \bf{\textbf}
\def \it{\textit}
\def \sc{\textsc}
\def \bop {\noindent\textbf{Proof }}
\def \eop {\hbox{}\nobreak\hfill
\vrule width 2mm height 2mm depth 0mm
\par \goodbreak \smallskip}
\def \ni {\noindent}
\def \sni {\ss\ni}
\def \bni {\bigskip\ni}
\def \ss {\smallskip}
\def \F{\mathcal{F}}
\def \g{\mathcal{g}}
\def \K{\mathcal{K}}
 \numberwithin{equation}{section}
\newtheorem{theorem}{Theorem}[section]
\newtheorem{lemma}[theorem]{Lemma}
\newtheorem{proposition}[theorem]{Proposition}
\newtheorem{definition}[theorem]{Definition}
\newtheorem{example}[theorem]{Example}
\newtheorem{remark}[theorem]{Remark}
\allowdisplaybreaks

\section{Introduction}

The notion of controllability is of great importance in mathematical
control theory. Many fundamental problems of control theory such as
pole-assignment, stabilizability and optimal control my be solved
under the assumption that the system is controllable.  The problem
of controllability is to show the existence of control function,
which steers the solution of the system from its initial state to
final state, where the initial and final states may vary over the
entire space.  Conceived by Kalman, the controllability concept has
been studied extensively in the fields of finite-dimensional
systems, infinite-dimensional systems, hybrid systems, and
behavioral systems.  If a system cannot be controlled completely
then different types of controllability can be defined such as
approximate, null, local null and local approximate null
controllability. For more details the reader may refer to
\cite{klam07,klam13,ren11,ren13a,ren13b} and the references therein.
In this paper, we study the controllability  of  neutral functional
stochastic differential equations  of the form\\
{
\begin{equation}\label{eq1}
\begin{split}
d[x(t)-g(t,x_t)]=&[Ax(t)+f(t,x_t)+Bu(t)]dt+\sigma (t)dB^H(t),\, t \in[0, T],\\
& \Delta x|_{t=t_k}=x(t_k^+)-x(t_k^-)=I_k(x(t_k^-)),\; k=1,...,m,\\
x(t)=&\varphi(t)\in L_2^0(\Omega,\mathcal{B}_h),\; for\; a.e.\;
t\in(-\infty,0].
\end{split}
\end{equation}
Here,  $A$ is the infinitesimal generator of an analytic semigroup
of bounded linear operators, $(S(t))_{t\geq 0}$, in a Hilbert space
$X$; $B^H$ is a fractional Brownian motion with $H>\frac{1}{2} $ on
a real and separable Hilbert space $Y$; and the control function
$u(\cdot)$ takes values in $L^2([0,T],U)$, the Hilbert space of
admissible control functions for a separable Hilbert space  $U$; and
$B$ is a bounded linear operator from $U$ into $X$.

\pagestyle{fancy} \fancyhead{} \fancyhead[EC]{E. LAKHEL}
\fancyhead[EL,OR]{\thepage} \fancyhead[OC]{ Controllability for
INSDE Driven by a  Fractional Brownian Motion} \fancyfoot{}
\renewcommand\headrulewidth{0.5pt}

The history $x_{t}:(-\infty,0]\to X$, $x_{t}(\theta)=x(t+\theta)$,
belongs to an abstract phase  space ${\mathcal{B}_h}$ defined
axiomatically,   and $ f,g:[0,T]\times \mathcal{B}_h \to X $ are
appropriated
 functions,    and $\sigma:[0,T] \rightarrow \mathcal{L}_2^0(Y,X)$, are
appropriate functions, where $\mathcal{L}_2^0(Y,X)$ denotes the
space of all $Q$-Hilbert-Schmidt operators from $Y$ into $X$ (see
section 2 below). Moreover, the fixed moments of time $t_k$ satisfy
$0<t_1<t_2<...<t_m< T $; $x(t_k^-)$ and
 $x(t_k^+)$ represent the left and right
 limits of $x(t)$ at time $t_k$ respectively.  $\Delta x(t_k)$
  denotes  the jump in the state $x$
  at time $t_k$ with $I(.): X\longrightarrow X$ determining the size of the
  jump.

The theory of impulsive differential equations as much as neutral
partial differential equations has become an important area of
investigation in recent years stimulated by their numerous
applications to problems arising in  mechanics, medicine and
biology, economics, electronics and telecommunication etc., in which
sudden and abrupt changes occur ingenuously, in the form of
impulses.  For more details on this theory and application see the
papers  \cite{anguraj,lak8,mahkaru14,sakt12,xu2006}.

  It is known that fractional Brownian motion, with Hurst parameter $H\in(0,1)$,
  is a generalization of Brownian motion and it reduces to a standard Brownian motion when
  $H=\frac{1}{2}$. A general theory for the infinite-dimensional stochastic differential equations   driven by
a fractional Brownian motion (fBm) has begun to recieve attention by
various researchers, see e.g. \cite{boufoussi1,lak15,ren13}. For
example, Dung studied the existence and uniqueness of impulsive
stochastic
      Volterra integro-differential equation driven by fBm in
     \cite{dung15} .  Using  the Riemann-Stieltjes integral,
      Boufoussi  et al.  \cite{boufoussi2} proved the existence and
    uniqueness of a mild solution to a related problem and studied the dependence of the
    solution on the initial condition in   infinite dimensional space. Very
    recently, Caraballo and Diop \cite{carab13},  Caraballo et al.   \cite{carab}, and Boufoussi
     and Hajji \cite{boufoussi3} have discussed the existence,
     uniqueness and exponential asymptotic behavior
     of mild solutions by using the Wiener integral.

To the best of the author's knowledge, an investigation concerning
the controllability for impulsive neutral stochastic differential
equations with infinite delay of the form \eqref{eq1} driven by a
fractional Brownian motion has not yet been conducted. Thus, we will
make the first attempt to study such problem in this paper. Our
results are motivated by those in \cite{lak16,lak8} where the
controllability  of mild solutions to  neutral stochastic functional
integro-differential equations driven by fractional Brownian motion
with finite delays   are studied.

The  outline  of this paper is  as follows:  In Section 2 we
introduce some notations, concepts, and basic results about
fractional Brownian motion,  the Wiener integral defined in general
Hilbert spaces, phase spaces  and properties of analytic semigroups
and the fractional powers associated to its generator. In Section 3,
we derive the controllability of impulsive neutral
 stochastic differential systems driven by a fractional Brownian motion.
Finally, in Section 4, we conclude with an example to illustrate the
applicability of the general theory.

\section{Preliminaries}

We collect some notions, concepts and lemmas concerning the Wiener
integral with respect to an infinite dimensional fractional
Brownian,
 and we recall some basic results about analytical semigroups and  fractional powers
 of their infinitesimal generators, which will be used throughout the whole of this chapter.
 For details of the topics addressed in this section, we refer the reader to
\cite{nualart,pazy} and the
  references therein.

Let $(\Omega,\mathcal{F},\{\mathcal{F}_t\}_{t\geq0}, \mathbb{P})$ be
a complete probability space satisfying the usual conditions,
meaning that the filtration is a right-continuous increasing family
and $\mathcal{F}_0$ contains all P-null sets.

 Consider a
time interval $[0,T]$ with arbitrary fixed horizon $T$ and let
$\{\beta^H(t) : t \in [0, T ]\}$ be a one-dimensional fractional
Brownian motion with Hurst parameter $H\in(1/2,1)$. By definition,
$\beta^H$ is a centered Gaussian process with covariance function
$$ R_H(s, t) =\frac{1}{2}(t^{2H} + s^{2H}-|t-s|^{2H}).$$
 Moreover, $\beta^H$ has the following Wiener
integral representation:
\begin{equation}\label{rep}
\beta^H(t) =\int_0^tK_H(t,s)d\beta(s),
 \end{equation}
where $\beta = \{\beta(t) :\; t\in [0,T]\}$ is a Wiener process and
kernel $K_H(t, s)$ is the kernel given by
$$K_H(t, s )=c_Hs^{\frac{1}{2}-H}\int_s^t (u-s)^{H-\frac{3}{2}}u^{H-\frac{1}{2}}du,
$$
for $t>s$, where $c_H=\sqrt{\frac{H(2H-1)}{g (2-2H,H-\frac{1}{2})}}$ and $g(\cdot,\cdot \cdot)$ denotes the Beta function. We take $K_H(t, s ) =0$ if $t\leq s$.\\
We will denote by $\mathcal{H}$ the reproducing kernel Hilbert space
of the fBm. Precisely, $\mathcal{H}$ is the closure of set of
indicator functions $\{1_{[0;t]} : t\in[0,T]\}$ with respect to the
scalar product
$$\langle 1_{[0,t]},1_{[0,s]}\rangle _{\mathcal{H}}=R_H(t , s).$$
The mapping $1_{[0,t]}\rightarrow \beta^H(t)$
 can be extended to an isometry between $\mathcal{H}$
and the first  Wiener chaos and we will denote by $\beta^H(\varphi)$
the image of $\varphi$ by the previous isometry.

Recall that for $\psi,\varphi \in \mathcal{H}$, the scalar product
in $\mathcal{H}$ is given by
$$\langle \psi,\varphi\rangle _{\mathcal{H}}=H(2H-1)\int_0^T\int_0^T\psi(s)\varphi(t)|t-s|^{2H-2}dsdt.
$$
Consider the operator $K_H^*$ from $\mathcal{H}$ to $L^2([0,T])$
defined by
$$(K_H^*\varphi)(s)=\int_s^T\varphi(r)\frac{\partial K_H}{\partial
r}(r,s)dr.
$$ The proof of the fact
that $K_H^*$ is an isometry between $\mathcal{H}$ and $L^2([0,T])$
can be found in \cite{nualart}. Moreover, for any $\varphi \in
\mathcal{H}$, we have
$$\beta^H(\varphi)=\int_0^T(K_H^*\varphi)(t)d\beta(t).
$$

It follows from \cite{nualart} that the elements of $\mathcal{H}$
may be not functions but rather distributions of negative order. In
order to obtain a space of functions contained in $\mathcal{H}$, we
consider the linear space $|\mathcal{H}|$ generated by the
measurable functions $\psi$ such that
$$\|\psi \|^2_{|\mathcal{H}|}:= \alpha_H  \int_0^T \int_0^T|\psi(s)||\psi(t)| |s-t|^{2H-2}dsdt<\infty,
$$
where $\alpha_H = H(2H-1)$. We have the following lemma  (see
\cite{nualart}).
\begin{lemma}\label{lem1}
The space $|\mathcal{H}|$ is a Banach space with the norm
$\|\psi\|_{|\mathcal{H}|}$; the following inclusions hold
$$\mathbb{L}^2([0,T])\subseteq \mathbb{L}^{1/H}([0,T])\subseteq |\mathcal{H}|\subseteq \mathcal{H};
$$
and for any $\varphi\in \mathbb{L}^2([0,T])$,
$$\|\psi\|^2_{|\mathcal{H}|}\leq 2HT^{2H-1}\int_0^T
|\psi(s)|^2ds.
$$
\end{lemma}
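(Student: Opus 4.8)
The plan is to treat the three assertions in order of increasing delicacy: the quantitative $\mathbb{L}^2$-bound first (it is self-contained and elementary), then the chain of inclusions, and finally the Banach-space structure. Throughout I write $\alpha_H=H(2H-1)$ and regard $\|\psi\|_{|\mathcal{H}|}^2=\alpha_H\int_0^T\int_0^T|\psi(s)||\psi(t)||s-t|^{2H-2}\,ds\,dt$ as generated by the nonnegative, strictly positive-definite kernel $(s,t)\mapsto|s-t|^{2H-2}$, noting that $2H-2\in(-1,0)$.

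For the last inequality I would first apply the elementary bound $|\psi(s)||\psi(t)|\le\frac12(|\psi(s)|^2+|\psi(t)|^2)$ and use the symmetry of the kernel to reduce the double integral to $\alpha_H\int_0^T|\psi(s)|^2\big(\int_0^T|s-t|^{2H-2}\,dt\big)\,ds$. The inner integral is computed explicitly as $\int_0^T|s-t|^{2H-2}\,dt=\frac{s^{2H-1}+(T-s)^{2H-1}}{2H-1}\le\frac{2T^{2H-1}}{2H-1}$, and multiplying by $\alpha_H=H(2H-1)$ collapses the constant to exactly $2HT^{2H-1}$. This already establishes $\mathbb{L}^2([0,T])\subseteq|\mathcal{H}|$ together with the stated bound.

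For the inclusions, $\mathbb{L}^2\subseteq\mathbb{L}^{1/H}$ is immediate from Jensen's (equivalently Hölder's) inequality on the finite-measure interval $[0,T]$, since $1/H\in(1,2)$. The inclusion $\mathbb{L}^{1/H}\subseteq|\mathcal{H}|$ is where the real work sits: I would invoke the Hardy--Littlewood--Sobolev inequality in dimension one with kernel exponent $\lambda=2-2H\in(0,1)$. Choosing the conjugate Lebesgue exponents $p=q=1/H$ satisfies the HLS balance condition $\frac1p+\frac1q+\lambda=2H+(2-2H)=2$, which yields $\int_0^T\int_0^T|\psi(s)||\psi(t)||s-t|^{2H-2}\,ds\,dt\le C\|\psi\|_{\mathbb{L}^{1/H}}^2$ and hence $\|\psi\|_{|\mathcal{H}|}<\infty$. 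Finally $|\mathcal{H}|\subseteq\mathcal{H}$ follows because $\psi(s)\psi(t)\le|\psi(s)||\psi(t)|$ together with nonnegativity of the kernel gives $\|\psi\|_{\mathcal{H}}\le\|\psi\|_{|\mathcal{H}|}$, so finiteness of the latter norm places $\psi$ in $\mathcal{H}$.

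It remains to verify that $\|\cdot\|_{|\mathcal{H}|}$ is a complete norm. Homogeneity is clear, and definiteness follows from the strict positive-definiteness of $|s-t|^{2H-2}$. For the triangle inequality I would note that $|\psi+\phi|\le|\psi|+|\phi|$ pointwise while the kernel is nonnegative, so the induced bilinear form is monotone in each nonnegative argument; expanding $\langle|\psi|+|\phi|,|\psi|+|\phi|\rangle$ and applying the Cauchy--Schwarz inequality for this positive-definite form gives $\|\psi+\phi\|_{|\mathcal{H}|}\le\|\psi\|_{|\mathcal{H}|}+\|\phi\|_{|\mathcal{H}|}$. The main obstacle is completeness: given a Cauchy sequence $(\psi_n)$ in $|\mathcal{H}|$, I would pass to a subsequence converging almost everywhere to some $\psi$ and apply Fatou's lemma to the defining double integral for $|\psi_n-\psi_m|$, thereby concluding both that $\psi\in|\mathcal{H}|$ and that $\psi_n\to\psi$ in the $|\mathcal{H}|$-norm. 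This is the step requiring the most care, and it is precisely the point settled in \cite{nualart}.
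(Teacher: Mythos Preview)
The paper does not actually prove this lemma; it merely states the result and attributes it to \cite{nualart}. Your argument supplies the standard proof that one would find by unpacking that reference: the $\mathbb{L}^2$-bound via $|\psi(s)||\psi(t)|\le\tfrac12(|\psi(s)|^2+|\psi(t)|^2)$ and explicit integration of $\int_0^T|s-t|^{2H-2}\,dt$ is exactly how the constant $2HT^{2H-1}$ arises; the Hardy--Littlewood--Sobolev step with exponents $p=q=1/H$ is the canonical route to the $\mathbb{L}^{1/H}\subseteq|\mathcal{H}|$ inclusion; and the triangle-inequality and completeness sketch is correct in outline.

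The one point you leave implicit is how to extract an almost-everywhere convergent subsequence from a $|\mathcal{H}|$-Cauchy sequence before invoking Fatou. This is easily filled: since $|s-t|\le T$ and $2H-2<0$, one has $|s-t|^{2H-2}\ge T^{2H-2}$ on $[0,T]^2$, whence $\|\psi\|_{|\mathcal{H}|}^2\ge \alpha_H T^{2H-2}\bigl(\int_0^T|\psi(s)|\,ds\bigr)^2$. Thus Cauchy in $|\mathcal{H}|$ implies Cauchy in $L^1([0,T])$, which yields the required a.e.\ convergent subsequence. With that detail supplied, your proof is complete and matches the standard treatment.
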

Let $X$ and $Y$ be two real, separable Hilbert spaces and let
$\mathcal{L}(Y,X)$ be the space of bounded linear operator from $Y$
to $X$. For convenience, we shall use the same notation to denote
the norms in $X,Y$ and $\mathcal{L}(Y,X)$. Let $Q\in
\mathcal{L}(Y,Y)$ be an operator defined by $Qe_n=\lambda_n e_n$
with finite trace
 $trQ=\sum_{n=1}^{\infty}\lambda_n<\infty$, where $\lambda_n \geq 0 \; (n=1,2...)$ are nonnegative
  real numbers and $\{e_n : n=1,2...\}$ is a complete orthonormal basis in $Y$.
 Let $B^H=(B^H(t))$  be  a $Y-$ valued fbm on
  $(\Omega,\mathcal{F}, \mathbb{P})$ with covariance $Q$ given by
 $$B^H(t)=B^H_Q(t)=\sum_{n=1}^{\infty}\sqrt{\lambda_n}e_n\beta_n^H(t),
 $$
 where $\beta_n^H$ are real, independent fBm's. This process is  Gaussian, it
 starts from $0$, has zero mean, and covariance:
 $$E\langle B^H(t),x\rangle\langle B^H(s),y\rangle=R(s,t)\langle Q(x),y\rangle \;\; \mbox{for all}\; x,y \in Y \;\mbox {and}\;  t,s \in [0,T].
 $$
In order to define Wiener integrals with respect to the $Q$-fBm, we
introduce the space $\mathcal{L}_2^0:=\mathcal{L}_2^0(Y,X)$  of all
$Q$-Hilbert-Schmidt operators $\psi:Y\rightarrow X$. Recall that
$\psi \in \mathcal{L}(Y,X)$ is called a $Q$-Hilbert-Schmidt operator
if
$$  \|\psi\|_{\mathcal{L}_2^0}^2:=\sum_{n=1}^{\infty}\|\sqrt{\lambda_n}\psi e_n\|^2 <\infty,
$$
and that the space $\mathcal{L}_2^0$ equipped with the inner product
$\langle \varphi,\psi
\rangle_{\mathcal{L}_2^0}=\sum_{n=1}^{\infty}\langle \varphi
e_n,\psi e_n\rangle$ is a separable Hilbert space.

Let $\phi: [0,T] \rightarrow \mathcal{L}_2^0(Y,X)$ be a given
function.
 The Wiener integral of $\phi$ with respect to $B^H$ is defined by

{\footnotesize{\begin{equation}\label{int}
\int_0^t\phi(s)dB^H(s)=\sum_{n=1}^{\infty}\int_0^t
\sqrt{\lambda_n}\phi(s)e_nd\beta^H_n(s)=\sum_{n=1}^{\infty}\int_0^t
 \sqrt{\lambda_n}(K_H^*(\phi e_n)(s)d\beta_n(s),
\end{equation}}}
where $\beta_n$ is the standard Brownian motion used to  define $\beta_n^H$ as in $(\ref{rep})$.\\
We conclude this subsection by stating the following result which is
critical in the proof of our result. It can be proved using
arguments similar to those used to prove   Lemma 2 in \cite{carab}.
\begin{lemma}\label{lem2}
If $\psi:[0,T]\rightarrow \mathcal{L}_2^0(Y,X)$ satisfies $\int_0^T
\|\psi(s)\|^2_{\mathcal{L}_2^0}ds<\infty,$
 then $(\ref{int})$ is well-defined as an $X$-valued random variable and
$$ \mathbb{E}\|\int_0^t\psi(s)dB^H(s)\|^2\leq 2Ht^{2H-1}\int_0^t \|\psi(s)\|_{\mathcal{L}_2^0}^2ds.
 $$
\end{lemma}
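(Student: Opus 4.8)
The plan is to prove Lemma \ref{lem2} by reducing the $X$-valued stochastic integral to a sum of scalar Wiener integrals and then applying the estimate from Lemma \ref{lem1}. First I would fix the orthonormal basis $\{e_n\}$ of $Y$ and recall the defining series in $(\ref{int})$, writing
\[
\int_0^t\psi(s)dB^H(s)=\sum_{n=1}^{\infty}\int_0^t\sqrt{\lambda_n}(K_H^*(\psi e_n))(s)d\beta_n(s).
\]
Since the $\beta_n$ are independent standard Brownian motions, the terms of this sum are mutually orthogonal in $L^2(\Omega;X)$, so by the It\^o isometry applied coordinatewise the expected squared norm collapses to
\[
\mathbb{E}\Big\|\int_0^t\psi(s)dB^H(s)\Big\|^2=\sum_{n=1}^{\infty}\lambda_n\int_0^t\big|(K_H^*(\psi e_n))(s)\big|^2ds=\sum_{n=1}^{\infty}\lambda_n\|K_H^*(\psi e_n)\|^2_{L^2([0,T])}.
\]
Here I am implicitly treating each $\psi e_n$ as an $X$-valued function, so $K_H^*$ acts componentwise; the orthogonality step is what guarantees the cross terms vanish and must be justified by the independence of the $\beta_n$.

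Next I would invoke the isometry property of $K_H^*$ together with Lemma \ref{lem1}. Because $K_H^*$ is an isometry between $\mathcal{H}$ and $L^2([0,T])$, we have $\|K_H^*(\psi e_n)\|^2_{L^2([0,T])}=\|\psi e_n\|^2_{|\mathcal{H}|}$ (after restricting to the interval $[0,t]$), and the final inequality of Lemma \ref{lem1} bounds this by $2Ht^{2H-1}\int_0^t\|\psi(s)e_n\|^2ds$. Summing over $n$ and interchanging the sum with the integral via Tonelli's theorem (all integrands being nonnegative) yields
\[
\mathbb{E}\Big\|\int_0^t\psi(s)dB^H(s)\Big\|^2\leq 2Ht^{2H-1}\int_0^t\sum_{n=1}^{\infty}\lambda_n\|\psi(s)e_n\|^2ds=2Ht^{2H-1}\int_0^t\|\psi(s)\|^2_{\mathcal{L}_2^0}ds,
\]
which recognizes the inner sum as precisely the $Q$-Hilbert-Schmidt norm $\|\psi(s)\|^2_{\mathcal{L}_2^0}$ defined above.

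The main obstacle I anticipate is the rigorous justification that the series defining the integral converges in $L^2(\Omega;X)$ and that the random variable is well-defined, rather than the inequality itself, which follows cleanly once convergence is in hand. Concretely, one must show the partial sums $\sum_{n=1}^{N}\int_0^t\sqrt{\lambda_n}(K_H^*(\psi e_n))(s)d\beta_n(s)$ form a Cauchy sequence in $L^2(\Omega;X)$; this is exactly where the finiteness hypothesis $\int_0^T\|\psi(s)\|^2_{\mathcal{L}_2^0}ds<\infty$ enters, since the tail of the series is controlled by $2Ht^{2H-1}\int_0^t\sum_{n>N}\lambda_n\|\psi(s)e_n\|^2ds\to 0$. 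Since the statement notes this can be established by arguments analogous to Lemma 2 in \cite{carab}, I would cite that reference for the convergence and measurability details and present the chain of (in)equalities above as the core quantitative estimate.
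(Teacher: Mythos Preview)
The paper does not actually prove this lemma: it simply asserts that the result ``can be proved using arguments similar to those used to prove Lemma 2 in \cite{carab}'' and moves on. Your sketch is precisely that argument---expand the integral as a series over the basis $\{e_n\}$, use independence of the $\beta_n$ and the It\^o isometry to reduce to scalar terms, then apply the $K_H^*$ isometry and the estimate from Lemma~\ref{lem1}---so you are not deviating from the intended route; you are filling in what the paper leaves to a citation.

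One small correction: the isometry of $K_H^*$ is between $\mathcal{H}$ and $L^2([0,T])$, so it gives $\|K_H^*(\psi e_n)\|_{L^2}^2=\|\psi e_n\|_{\mathcal{H}}^2$, not $\|\psi e_n\|_{|\mathcal{H}|}^2$ as you wrote. You then need the trivial inequality $\|\cdot\|_{\mathcal{H}}\leq\|\cdot\|_{|\mathcal{H}|}$ before invoking Lemma~\ref{lem1}. This does not affect the final bound, since only an upper estimate is required.
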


It is known that the study of theory of  differential equation with
infinite delays depends on a choice of the abstract phase space. We
assume that the phase space $\mathcal{B}_h$ is a linear space of
functions mapping $(-\infty ,0]$ into $X$, endowed with a norm
$\|.\|_{\mathcal{B}_h}$.  We shall introduce some basic definitions,
notations and lemma which are used in this paper. First, we present
the abstract phase space $\mathcal{B}_h$. Assume that $h:(-\infty
,0]\longrightarrow [0,+\infty)$  is a continuous function with
$l=\int_{-\infty}^0h(s)ds<+\infty$.

We define the abstract phase space $\mathcal{B}_h$ by
$$
\begin{array}{ll}
                    \mathcal{B}_h=   & \{\psi:(-\infty,0]\longrightarrow X \text{ for any } \tau>0, (\E\|\psi\|^2)^{\frac{1}{2}} \text{ is bounded and measurable }  \\
                       & \text{ function on } [-\tau,0] \text{ and } \int_{-\infty}^0h(t)\sup_{t\leq s\leq0}
                       (\E\|\psi(s)\|^2)^{\frac{1}{2}}dt<+\infty\}.
                    \end{array}
$$
If we equip this space with the norm

$$
\|\psi\|_{\mathcal{B}_h}:=\int_{-\infty}^0h(t)\sup_{t\leq
s\leq0}(\E\|\psi\|^2)^{\frac{1}{2}}dt,
$$
then it is clear that $(\mathcal{B}_h,\|.\|_{\mathcal{B}_h})$  is a
Banach space.

We now consider the space $\mathcal{B}_{DI}$ ($D$ and $I$ stand for
delay and impulse, respectively ) given by

$$\begin{array}{ll}
    \mathcal{B}_{DI} &= \{ x:(-\infty,T]\to X: x|I_k\in \mathcal{C}(I_k,X)
\text{ and } x(t_k^+), x(t_k^-) \text{ exist with  }\\ \\
     & x(t_k^-)=x(t_k), k=1,2,...,m,   x_0=\varphi\in \mathcal{B}_h  \text{
     and }
\sup_{0\leq t \leq T}\E(\|x(t)\|^2)<\infty \},
  \end{array}
$$
where $x|I_k$ is the restriction of $x$ to the interval
$I_k=(t_k,t_{k+1}]$, $k=1,2,...,m.$ The function
$\|.\|_{\mathcal{B}_{DI}}$ to be a semi-norm in $\mathcal{B}_{DI}$,
it is defined by
$$
\|x\|_{\mathcal{B}_{DI}}=\|x_0\|_{\mathcal{B}_h}+\sup_{0\leq t \leq
T}(\E(\|x(t)\|^2))^{\frac{1}{2}}.
$$
The following lemma  is a common property of phase spaces.
\begin{lemma}\cite{li-li} Suppose $x\in \mathcal{B}_{DI}$, then for all $t\in [0,T]$ ,  $x_t\in
\mathcal{B}_h$  and
$$
l(\E\|x(t)\|^2)^{\frac12}\leq\|x_{t}\|_{\mathcal{B}_h}\leq l\sup_{0
\leq s\leq t}(\E\|x(s)\|^2)^{\frac12}+ \|x_{0 }\|_{\mathcal{B}_h},
$$
where $l=\int_{-\infty}^0h(s)ds<\infty$.

\end{lemma}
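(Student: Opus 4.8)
The plan is to argue directly from the definition of the norm on $\mathcal{B}_h$ and of the history segment $x_t(\theta)=x(t+\theta)$, proving the two inequalities separately and obtaining the finiteness $x_t\in\mathcal{B}_h$ as a byproduct of the upper estimate. Throughout I would start from
$$\|x_t\|_{\mathcal{B}_h}=\int_{-\infty}^0 h(\tau)\sup_{\tau\le s\le 0}(\E\|x(t+s)\|^2)^{\frac12}\,d\tau,$$
and substitute $u=t+s$ inside the inner supremum, so that $\sup_{\tau\le s\le 0}(\E\|x(t+s)\|^2)^{\frac12}=\sup_{t+\tau\le u\le t}(\E\|x(u)\|^2)^{\frac12}$, which makes the splitting of the range explicit.

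For the lower bound I would simply take $s=0$ (equivalently $u=t$) in the inner supremum; since $t$ lies in the interval $[t+\tau,t]$ for every $\tau\le 0$, each inner supremum dominates $(\E\|x(t)\|^2)^{\frac12}$. Pulling this constant out of the $\tau$-integral and using $\int_{-\infty}^0 h(\tau)\,d\tau=l$ yields $l(\E\|x(t)\|^2)^{\frac12}\le\|x_t\|_{\mathcal{B}_h}$ immediately.

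For the upper bound I would split the $\tau$-integral at $\tau=-t$, according to whether $[t+\tau,t]$ lies entirely in $[0,t]$ or crosses the origin. On $-t\le\tau\le 0$ the interval is contained in $[0,t]$, so the inner supremum is at most $\sup_{0\le u\le t}(\E\|x(u)\|^2)^{\frac12}$. On $\tau<-t$ I would use subadditivity of the supremum over the splitting $[t+\tau,t]=[t+\tau,0]\cup[0,t]$ to bound the inner supremum by $\sup_{t+\tau\le u\le 0}(\E\|x(u)\|^2)^{\frac12}+\sup_{0\le u\le t}(\E\|x(u)\|^2)^{\frac12}$. Collecting the two occurrences of $\sup_{0\le u\le t}$ and again invoking $\int_{-\infty}^0 h=l$ produces the term $l\sup_{0\le s\le t}(\E\|x(s)\|^2)^{\frac12}$.

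The only delicate piece is the leftover contribution $\int_{-\infty}^{-t}h(\tau)\sup_{t+\tau\le u\le 0}(\E\|x(u)\|^2)^{\frac12}\,d\tau$, which must be shown to be at most $\|x_0\|_{\mathcal{B}_h}$. A naive change of variables $\sigma=t+\tau$ would introduce $h(\sigma-t)$ and force an extra monotonicity hypothesis on $h$; the key trick I would use instead is that for $t\ge 0$ one has $[t+\tau,0]\subseteq[\tau,0]$, so the inner supremum is dominated by $\sup_{\tau\le u\le 0}(\E\|x(u)\|^2)^{\frac12}$ with no shift. Since $x(u)=x_0(u)=\varphi(u)$ for $u\le 0$, enlarging the integration range back to $(-\infty,0]$ (legitimate because the integrand is nonnegative) reproduces exactly $\|x_0\|_{\mathcal{B}_h}$. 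This handling of the shift is the main obstacle to watch, as it is precisely where one is tempted to impose superfluous conditions on $h$. Finiteness $x_t\in\mathcal{B}_h$ then follows at once: the boundedness and measurability requirement holds because $x$ is piecewise continuous on $[0,T]$ and equals $\varphi$ on $(-\infty,0]$, while the norm is finite since $\|x_0\|_{\mathcal{B}_h}<\infty$ (as $x_0=\varphi\in\mathcal{B}_h$) and $\sup_{0\le s\le T}\E\|x(s)\|^2<\infty$ by the definition of $\mathcal{B}_{DI}$.
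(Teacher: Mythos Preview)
Your argument is correct. The lower bound is immediate from evaluating the inner supremum at $s=0$, and your handling of the upper bound---splitting at $\tau=-t$, using subadditivity of the supremum over $[t+\tau,0]\cup[0,t]$, and then the inclusion $[t+\tau,0]\subseteq[\tau,0]$ to absorb the remaining integral into $\|x_0\|_{\mathcal{B}_h}$---is exactly the standard route and avoids the trap of shifting the $h$-weight.

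As for comparison: the paper does not actually prove this lemma. It is stated with a citation to \cite{li-li} and used as a black box, so there is no ``paper's own proof'' to compare against. Your write-up is a self-contained verification of the cited result and would serve well as an appendix-level justification; the only minor polish I would suggest is to state explicitly at the outset that the measurability/boundedness clause in the definition of $\mathcal{B}_h$ holds for $x_t$ on every $[-\tau,0]$ because $x$ is piecewise continuous on $[0,T]$ and coincides with $\varphi\in\mathcal{B}_h$ on $(-\infty,0]$, which you do mention at the end but which logically precedes the norm estimate.
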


\begin{definition} The map $f:[0,T]\times \mathcal{B}_h\to X$ is said
to be an $L^2$-Carath\'eodory if
\begin{enumerate}
  \item  $t\longmapsto f(t,x)$ is measurable for each $x\in
  \mathcal{B}_h$,
  \item $x\longmapsto f(t,x)$ is continuous for almost all $t\in
  [0,T]$,
  \item for every positive integer $q$ there exists $h_q\in
  L^1([0,T],\R^+)$, such that
  $$
\E\|f(t,x)\|^2\leq h_q(t), \,\text{ for all  }\,
\|x\|_{\mathcal{B}_h}^2\leq q \text{ and for a.e.  } \,t\in[0,T].
  $$
\end{enumerate}

\end{definition}

 Next, we introduce some notations and basic facts about
the theory of semigroups and fractional power operators. Let
$A:D(A)\rightarrow X$  be the infinitesimal generator of an analytic
semigroup, $(S (t))_{t\geq 0}$, of bounded linear operators on $X$.
The theory of strongly continuous is thoroughly discussed in
\cite{pazy}  and \cite{gold}.  It is well-known that there exist $M
\geq 1$ and $\lambda \in \mathbb{R}$ such that $\|S (t)\|\leq M
e^{\lambda t}$ for every $t \geq 0$.
  If $(S (t))_{t\geq 0}$ is a
uniformly bounded, analytic semigroup such that $0 \in \rho(A)$,
where $\rho (A)$ is the resolvent set of $A$, then it is possible to
define the fractional power $(-A)^{\alpha}$  for $0 < \alpha \leq
1$, as a closed linear operator on its domain $D(-A)^{\alpha}$.
Furthermore, the subspace $D(-A)^{\alpha}$ is dense in $X$, and the
expression
$$\|h\|_{\alpha} =\|(-A)^{\alpha}h\|$$ defines a norm in
$D(-A)^{\alpha}$. If $X_{\alpha}$ represents the space
$D(-A)^{\alpha}$ endowed with the norm $\|.\|_{\alpha}$, then the
following properties hold (see \cite{pazy}, p. 74).
\begin{lemma}\label{lem3} Suppose that $A, X_{\alpha},$ and $(-A)^{\alpha}$ are as described above.
\begin{itemize}
\item[(i)] For \,$0<\alpha \leq  1$,  $X_{\alpha}$ is a Banach space.
\item[(ii)] If\,  $0 <\beta \leq \alpha, $ then the injection $X_{\alpha}
\hookrightarrow X_{\beta}$ is continuous.
 \item[(iii)] For every \,$0<\alpha \leq 1,$ there exists $M_{\alpha} > 0 $ such that
 $$\| (-A)^{\alpha}S (t)\|\leq M_{\alpha}t^{-\alpha}e^{-\lambda t}
 , \;\;\;\;t>0,\;\; \lambda>0 .$$
 \end{itemize}
 \end{lemma}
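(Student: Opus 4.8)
The plan is to treat the three assertions separately, relying on the standing assumptions that $(S(t))_{t\geq0}$ is a uniformly bounded analytic semigroup with $0\in\rho(A)$, so that for every $0<\alpha\leq1$ the operator $(-A)^{\alpha}$ is closed and densely defined and is boundedly invertible with bounded inverse $(-A)^{-\alpha}$. For (i), I would first observe that $\|h\|_{\alpha}=\|(-A)^{\alpha}h\|$ is a genuine norm on $X_{\alpha}=D((-A)^{\alpha})$, and not merely a seminorm: it is positive-definite because $0\in\rho(A)$ forces $(-A)^{\alpha}$ to be injective. Completeness then follows from closedness of $(-A)^{\alpha}$ together with boundedness of $(-A)^{-\alpha}$. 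Concretely, if $(h_n)$ is Cauchy in $\|\cdot\|_{\alpha}$, the images $(-A)^{\alpha}h_n$ form a Cauchy sequence in $X$ and hence converge to some $y\in X$; setting $h=(-A)^{-\alpha}y\in X_{\alpha}$ and using $\|h_n-h\|_{\alpha}=\|(-A)^{\alpha}h_n-y\|\to0$ shows $h_n\to h$ in $X_{\alpha}$, so $X_{\alpha}$ is a Banach space.

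For (ii), the key ingredients are the domain inclusion $D((-A)^{\alpha})\subseteq D((-A)^{\beta})$ for $0<\beta\leq\alpha$ and the boundedness of the negative power $(-A)^{-(\alpha-\beta)}$, the latter coming from the integral representation
$$(-A)^{-\gamma}=\frac{1}{\Gamma(\gamma)}\int_0^{\infty}s^{\gamma-1}S(s)\,ds,$$
which converges in operator norm for $\gamma>0$ thanks to the exponential decay of $\|S(s)\|$. Factoring $(-A)^{\beta}=(-A)^{-(\alpha-\beta)}(-A)^{\alpha}$ via the additivity law for fractional powers then gives, for every $h\in X_{\alpha}$,
$$\|h\|_{\beta}=\|(-A)^{\beta}h\|\leq\|(-A)^{-(\alpha-\beta)}\|\,\|(-A)^{\alpha}h\|=C\|h\|_{\alpha},$$
which is exactly the continuity of the injection $X_{\alpha}\hookrightarrow X_{\beta}$.

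For (iii), which is where analyticity genuinely enters and which I expect to be the main obstacle, I would proceed in two stages. First I would establish the fundamental smoothing estimate $\|(-A)S(t)\|\leq M_1 t^{-1}e^{-\lambda t}$ for $t>0$: this comes from writing $S(t)$ as a Cauchy integral $S(t)=\frac{1}{2\pi i}\int_{\Gamma}e^{zt}(zI-A)^{-1}\,dz$ over a suitable sectorial contour $\Gamma$, differentiating under the integral sign to represent $AS(t)$, and bounding the resulting integral through the sectorial resolvent estimate $\|(zI-A)^{-1}\|\leq C/|z|$ together with the exponential decay encoded in the hypothesis on $\|S(t)\|$. Second, I would invoke the moment (interpolation) inequality $\|(-A)^{\alpha}x\|\leq C\|(-A)x\|^{\alpha}\|x\|^{1-\alpha}$, valid for $x\in D(A)$ and $0<\alpha<1$ and itself a consequence of the integral formula for fractional powers. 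Applying this with $x=S(t)y$ and combining it with the $t^{-1}$ smoothing estimate and the bound $\|S(t)\|\leq Me^{-\lambda t}$ yields
$$\|(-A)^{\alpha}S(t)y\|\leq C\|(-A)S(t)y\|^{\alpha}\|S(t)y\|^{1-\alpha}\leq M_{\alpha}\,t^{-\alpha}e^{-\lambda t}\|y\|,$$
and taking the supremum over $\|y\|\leq1$ gives the claimed bound; the endpoint $\alpha=1$ is just the smoothing estimate itself.

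The routine parts are (i) and (ii), which use only closedness, injectivity, and the boundedness of the negative powers. The real content sits in the smoothing estimate of (iii): the hard part will be controlling the Cauchy-integral representation of $AS(t)$ along the sectorial contour so as to extract simultaneously the sharp $t^{-\alpha}$ blow-up as $t\downarrow0$ and the exponential decay factor, after which the fractional-power bound follows painlessly from the moment inequality.
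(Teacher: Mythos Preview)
Your proof is correct and follows the standard textbook route (closedness/injectivity for (i), boundedness of negative powers and the additivity law for (ii), contour-integral smoothing estimate plus the moment inequality for (iii)). However, the paper does not prove this lemma at all: it is quoted as a known result with the reference ``(see \cite{pazy}, p.~74)'' immediately preceding the statement, so there is no in-paper argument to compare against. Your write-up essentially reconstructs the proof from Pazy's monograph, which is the source the paper defers to.
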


\section{Controllability Result}

In this section we derive controllability conditions for a class of
neutral stochastic functional differential equations with infinite
delays driven   by    a fractional Brownian motion in a real
separable Hilbert space. Before starting, we introduce the concepts
of a mild solution of the problem (\ref{eq1})  and the meaning of
controllability of
 neutral stochastic functional differential equation.

\begin{definition}
An $X$-valued  process $\{x(t) : t\in(-\infty,T]\}$ is a mild
solution of (\ref{eq1}) if

\begin{enumerate}
\item  $x(t)$ is measurable for each $t>0$, $x(t)=\varphi(t)$ on $( -\infty, 0]$, $\Delta x|_{t=t_k}=I_k(x(t_k^-))$, $k=1,2,...,m,$
 the restriction of $x(.)$ to $[0,T]-\{t_1,t_2,...,t_m\}$ is continuous,
 and for each $s\in[0,t)$ the function  $AS(t-s)g(s,x_s)$ is
 integrable,
\item for arbitrary $t \in [0,T]$, we have

\begin{equation}\label{eqmild2}
\begin{array}{ll}
x(t)&=S(t)(\varphi(0)-g(0,\varphi ))+g(t,x_t)\\ \\
&+ \int_0^t AS(t-s)g(s,x_s)ds +\int_0^t S(t-s)f(s,x_s)ds\\ \\
&+\int_0^t S(t-s)Bu(s)ds+\int_0^t S(t-s)\sigma(s)dB^H(s),\;\;\;\\
\\
 &+{\sum}_{0<t_k<t}S(t-t_k)I_k(x(t_k^-)),\;\;\; \mathbb{P}-a.s.
\end{array}
\end{equation}
\end{enumerate}
\end{definition}
\begin{definition}
The impulsive  neutral  stochastic functional differential equation
(\ref{eq1}) is said to be controllable on the interval $(-\infty,T]$
if for every initial stochastic process $\varphi$ defined on
$(-\infty,0]$, there exists a stochastic control $u\in L^2([0,T],
U)$ such that the mild solution $x(\cdot)$ of  (\ref{eq1}) satisfies
$x(T)=x_1$, where $x_1$ and $T$ are the preassigned terminal state
and time, respectively.
\end{definition}

Our main result in this paper is based on the following fixed point
theorem.
\begin{theorem}(Karasnoselskii's fixed point theorem)
Let $V$ be a bounded closed and convex subset of  a Banach space $X$
 and  let $\Pi_1$, $\Pi_2$ be two operators of $V$
 into $X$ satisfying:
\begin{enumerate}
\item  $\Pi_1(x)+\Pi_2(x)\in V$ whenever $x\in V$,

  \item  $\Pi_1$ is a contraction mapping, and
  \item $\Pi_2$ is  completely continuous.
\end{enumerate}
Then,  there exists a $z \in V$ such that $z=\Pi_1(z)+\Pi_2(z)$.
\end{theorem}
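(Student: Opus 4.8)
The plan is to combine the Banach contraction principle with Schauder's fixed point theorem, the latter being applicable because $V$ is a bounded, closed, convex set and the operator we construct will turn out to be completely continuous. The starting observation is that, since $\Pi_1$ is a contraction, the operator $I-\Pi_1$ is invertible on all of $X$: for each $z\in X$ the map $x\mapsto \Pi_1(x)+z$ is again a contraction with the same constant, so by the Banach fixed point theorem it admits a unique fixed point, which defines $(I-\Pi_1)^{-1}z$. A routine estimate shows that if $\Pi_1$ has Lipschitz constant $\alpha\in[0,1)$, then $(I-\Pi_1)^{-1}$ is Lipschitz continuous with constant $(1-\alpha)^{-1}$, and in particular continuous.

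First I would rewrite the fixed point equation. A point $z\in V$ satisfies $z=\Pi_1(z)+\Pi_2(z)$ if and only if $(I-\Pi_1)(z)=\Pi_2(z)$, that is, $z=(I-\Pi_1)^{-1}\Pi_2(z)$. Hence it suffices to produce a fixed point on $V$ of the single operator $\mathcal{T}:=(I-\Pi_1)^{-1}\circ\Pi_2$. To show $\mathcal{T}$ maps $V$ into $V$, fix $y\in V$ and set $x:=\mathcal{T}(y)$, so that $x=\Pi_1(x)+\Pi_2(y)$. I would recover $x$ as the limit of the iterates $x_{0}:=y$, $x_{n+1}:=\Pi_1(x_{n})+\Pi_2(y)$: provided $V$ is invariant under each such map, every $x_n$ stays in $V$; since $\Pi_1$ is a contraction the sequence is Cauchy, and as $V$ is closed its limit $x$ lies in $V$, giving $\mathcal{T}(V)\subseteq V$.

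Next I would verify that $\mathcal{T}$ is completely continuous on $V$. Continuity is immediate from the continuity of $(I-\Pi_1)^{-1}$ and of $\Pi_2$. For compactness, note that $\Pi_2(V)$ is relatively compact because $\Pi_2$ is completely continuous, and the continuous image $(I-\Pi_1)^{-1}\bigl(\overline{\Pi_2(V)}\bigr)$ of a compact set is compact; therefore $\mathcal{T}(V)$ is relatively compact. With $V$ bounded, closed, and convex and $\mathcal{T}:V\to V$ completely continuous, Schauder's fixed point theorem yields a point $z\in V$ with $\mathcal{T}(z)=z$, which is precisely the desired solution of $z=\Pi_1(z)+\Pi_2(z)$.

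The main obstacle I anticipate is exactly this self-mapping step. Condition (1) as stated only guarantees $\Pi_1(x)+\Pi_2(x)\in V$ along the diagonal, whereas the equation defining $x$ involves $\Pi_1(x)+\Pi_2(y)$ with $y$ frozen; the iteration above therefore needs the stronger invariance $\Pi_1(x)+\Pi_2(y)\in V$ for all $x,y\in V$, which is the form of the hypothesis customarily intended in this version of Krasnoselskii's theorem. Reconciling the stated diagonal condition with the invariance actually used — or supplying a convexity argument that derives it — is the delicate point; by contrast, the compactness and continuity of $\mathcal{T}$ are routine once $(I-\Pi_1)^{-1}$ is in hand.
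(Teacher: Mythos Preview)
The paper does not prove this theorem. It is stated without proof as a classical tool (with an implicit pointer to \cite{bur-kir}) and is then \emph{applied} in the proof of Theorem~\ref{th1}; there is no argument for Krasnoselskii's theorem itself to compare your proposal against.

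On the substance of your attempt: the route via $(I-\Pi_1)^{-1}\circ\Pi_2$ and Schauder is indeed the standard one, and you have correctly put your finger on the real difficulty. Two intertwined issues deserve to be separated. First, $\Pi_1$ is only assumed to map $V$ into $X$, so your opening claim that ``$I-\Pi_1$ is invertible on all of $X$'' is not available as stated; the Banach iteration $x_{n+1}=\Pi_1(x_n)+w$ must remain inside $V$ for $\Pi_1$ even to be applied, which is exactly the off-diagonal invariance $\Pi_1(x)+\Pi_2(y)\in V$ you flag. Second, as you note, hypothesis (1) in the form written here is the ``diagonal'' condition, whereas the classical Krasnoselskii hypothesis is the stronger ``for all $x,y\in V$'' version; with only the diagonal condition the usual proof does not go through, and no convexity trick recovers it in general. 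The formulation in the paper should be read as the customary off-diagonal one (this is how it is actually used in the paper's Step~1, where $\Pi_1(z)+\Pi_2(z)$ is estimated but the construction implicitly relies on the decoupled operators). So your plan is the right one modulo strengthening hypothesis (1) to its intended form; the honest conclusion is that the statement as literally written is the loosely phrased version common in applications, and your proof works once that is corrected.
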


 In order to establish the controllability of
(\ref{eq1}), we impose the following conditions on the data of the
problem:

\begin{itemize}
\item [$(\mathcal{H}.1)$]
 $A$ is the infinitesimal generator of an analytic semigroup, $(S (t))_{t\geq
0}$, of bounded linear operators on $X$. Further, $0 \in \rho (A)$,
and there exist constants $M,\; M_{1-\beta}$ such that
$$ \| S (t)\|^2\leq M\; \;\;\mbox{and}\;\;
\|(-A)^{1-\beta}S(t)\|\leq \frac{M_{1-\beta}}{t^{1-\beta}}, \,
\text{for all } t \in [0,T]$$ (see Lemma \ref{lem3}).

\item [$(\mathcal{H}.2)$]  $f$ is $L^2$-Carath\'eodory map and     there exist positive constants
$M_f, \overline{M_f}$  for $t\in[0,T]$, $x_1,x_2\in\mathcal{B}_{h}$
such that
$$
\E\|f(t,x_1)-f(t,x_2)\|^2\leq M_f\|x_1-x_2\|^2_{\mathcal{B}_{h}}, \,
\text{ and }\qquad \overline{M_f}=\sup_{t\in[0,T]}\|f(t,0)\|^2.
$$

\item [$(\mathcal{H}.3)$] There exist constants $0 <
\beta < 1,$\; $M_g>0$  and $\nu>0$  such
 that the function $g$ is $X_{\beta}$-valued and satisfies
\begin{itemize}
 \item [i)] $\E\|(-A)^{\beta}g(t,x)-(-A)^{\beta}g(t,y)\|^2 \leq M_g \|x-y\|^2_{\mathcal{B}_{h}},\; t\in [0,T], \;  $

  $x,y\in\mathcal{B}_{h}$   with \; $ \nu=4M_g
l^2(\|(-A)^{-\beta}\|^2+\frac{(M_{1-\beta}T^\beta)^2}{2\beta-1})<1.
$
\item [ii)]  $c_1= \|(-A)^{-\beta}\|$\; and
$\overline{M}_g=\sup_{t\in[0,T]}\|(-A)^{-\beta}g(t,0)\|^2$.
\end{itemize}

\item [$(\mathcal{H}.4)$] $I_k: X\longrightarrow X$   $k=1,2,...,m,$ and  there exist  constants
$M_k\geq0,$ $\widetilde{M}_k\geq0$   such that
  $\E\|I_k(x)-I_k(y)\|^2\leq M_k\|x-y\|^2$  and $\|I_k(x)\|^2\leq \widetilde{M}_k$  for
any $x, y\in X$.

\item [$(\mathcal{H}.5)$] The function $\sigma:[0,\infty)\rightarrow \mathcal{L}_2^0(Y,X)$ satisfies
 $$\int_0^T\|\sigma(s)\|^2_{\mathcal{L}_2^0}ds< \infty,\;\; \forall T>0 .
 $$
 \item [$(\mathcal{H}.6)$] The linear operator $W$ from $U$ into $X$
 defined by
 $$
Wu=\int_0^TS(T-s)Bu(s)ds
 $$
 has an inverse operator $W^{-1}$ that takes values in $L^2([0,T],U)\setminus ker
 W$, where $$
 ker
 W=\{x\in L^2([0,T],U):  \; W x=0\}
 $$ (see \cite{klam07}), and there
 exists finite
 positive constants $M_b,$ $M_w$ such that $\|B\|^2\leq M_b$ and $\|W^{-1}\|^2\leq M_w.$

\end{itemize}

The main result of this chapter is the following.
\begin{theorem}\label{th1}
Suppose that $(\mathcal{H}.1)-(\mathcal{H}.6)$ hold. Then,  the
system  (\ref{eq1}) is controllable  on $(-\infty,T]$ provide that
\begin{equation}\label{cond1}
7l^2(1+8MM_bM_wT^2)\{8(c_1^2+\frac{(M_{1-\beta}T^\beta)^2}{2\beta-1})M_g+8MT^2M_f\}<1.
\end{equation}

\end{theorem}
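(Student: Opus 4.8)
The plan is to recast controllability as a fixed point problem for the solution map associated with \eqref{eqmild2}, and then invoke Krasnoselskii's theorem. First I would exploit $(\mathcal{H}.6)$ to define, for each admissible process $x$, the steering control
\begin{equation*}
\begin{split}
u_x(t)=W^{-1}\Big[&x_1-S(T)(\varphi(0)-g(0,\varphi))-g(T,x_T)-\int_0^T AS(T-s)g(s,x_s)\,ds\\
&-\int_0^T S(T-s)f(s,x_s)\,ds-\int_0^T S(T-s)\sigma(s)\,dB^H(s)\\
&-\sum_{0<t_k<T}S(T-t_k)I_k(x(t_k^-))\Big](t).
\end{split}
\end{equation*}
Substituting $u_x$ back into \eqref{eqmild2} and evaluating at $t=T$ cancels the bracketed terms against $\int_0^T S(T-s)Bu_x(s)\,ds=Wu_x$, forcing $x(T)=x_1$. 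Hence any fixed point of the operator $\Phi$ given by the right-hand side of \eqref{eqmild2} with $u=u_x$ is a controllable mild solution, and it remains to produce such a fixed point inside a ball $V=\{x\in\mathcal{B}_{DI}:\|x\|^2_{\mathcal{B}_{DI}}\leq q\}$ for $q$ to be chosen.

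Next I would split $\Phi=\Pi_1+\Pi_2$, assigning to $\Pi_1$ the neutral terms
\[
(\Pi_1 x)(t)=g(t,x_t)+\int_0^t AS(t-s)g(s,x_s)\,ds,
\]
and to $\Pi_2$ the remaining contributions (the semigroup term, the drift $f$, the control integral $\int_0^t S(t-s)Bu_x(s)\,ds$, the stochastic convolution, and the impulse sum), with both operators set equal to the prescribed data on $(-\infty,0]$. Writing $AS(t-s)g=(-A)^{1-\beta}S(t-s)(-A)^\beta g$ and using Lemma \ref{lem3}(iii) together with $(\mathcal{H}.3)$, the phase-space inequality $l(\E\|x(t)\|^2)^{1/2}\le\|x_t\|_{\mathcal{B}_h}$ from the cited lemma \cite{li-li}, and the elementary integral $\int_0^t(t-s)^{-2(1-\beta)}\,ds=t^{2\beta-1}/(2\beta-1)$ (which converges precisely because $\beta>1/2$), I expect the bound
\[
\sup_{0\le t\le T}\E\|(\Pi_1 x)(t)-(\Pi_1 y)(t)\|^2\le\nu\,\|x-y\|^2_{\mathcal{B}_{DI}},
\]
so that $\Pi_1$ is a contraction by virtue of $\nu<1$ in $(\mathcal{H}.3)$.

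For the invariance requirement I would estimate $\E\|(\Pi_1 x)(t)+(\Pi_2 x)(t)\|^2$ using $(\sum_{i=1}^n a_i)^2\le n\sum a_i^2$, Lemma \ref{lem2} for the stochastic convolution, and $(\mathcal{H}.1),(\mathcal{H}.2),(\mathcal{H}.4),(\mathcal{H}.5)$, together with the control estimate coming from $\|B\|^2\le M_b$ and $\|W^{-1}\|^2\le M_w$. The feedback of $u_x$ reproduces the same growth terms inside the control integral, which is the source of the factor $(1+8MM_bM_wT^2)$ in \eqref{cond1}; the inequality \eqref{cond1} is exactly what guarantees that the resulting affine bound $\E\|\Phi x\|^2\le C+\theta q$ has $\theta<1$, so that some finite $q$ renders $V$ invariant under $\Pi_1+\Pi_2$.

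The main effort, and the principal obstacle, is showing that $\Pi_2$ is completely continuous. I would first verify continuity of $\Pi_2$ on $V$ by combining the $L^2$-Carath\'eodory property of $f$ with the Lipschitz hypotheses and the dominated convergence theorem. For compactness I would show that $\{\Pi_2 x:x\in V\}$ is uniformly bounded and equicontinuous on each subinterval $(t_k,t_{k+1}]$: the equicontinuity estimates split each convolution as $\int_0^{t'}-\int_0^{t}$ and exploit strong continuity of $S(\cdot)$ away from $0$, Lemma \ref{lem2} to control the stochastic increment over $[t,t']$, and the uniform jump bound $\|I_k(\cdot)\|^2\le\widetilde M_k$. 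Relative compactness of the section $\{(\Pi_2 x)(t):x\in V\}$ in $X$ at each fixed $t$ would then follow from the compactness of the analytic semigroup $S(\varepsilon)$ for $\varepsilon>0$, by approximating the convolutions with $S(\varepsilon)$-shifted integrals and letting $\varepsilon\to0$. An Arzel\`a--Ascoli argument adapted to the piecewise-continuous space $\mathcal{B}_{DI}$ finishes the complete continuity of $\Pi_2$. With the three hypotheses of Krasnoselskii's theorem established, the existence of a fixed point $x=\Pi_1 x+\Pi_2 x$, that is, a controllable mild solution, follows.
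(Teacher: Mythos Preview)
Your strategy matches the paper's: define the steering control via $W^{-1}$, substitute it into the mild-solution operator, and apply Krasnoselskii's theorem with $\Pi_1$ carrying the neutral terms and $\Pi_2$ the rest. The paper places the stochastic convolution and the term $-S(t)g(0,\varphi)$ in $\Pi_1$ rather than $\Pi_2$, but since these are independent of the unknown the contraction constant is unchanged and the compactness argument is unaffected, so that difference is immaterial.

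There is, however, a technical slip in your setup that the paper handles with a device you omit. You work directly in $\mathcal{B}_{DI}$ and declare that both $\Pi_1$ and $\Pi_2$ agree with $\varphi$ on $(-\infty,0]$; then $(\Pi_1 x+\Pi_2 x)|_{(-\infty,0]}=2\varphi$, and in any case $\mathcal{B}_{DI}$ carries only a seminorm, so Krasnoselskii's theorem is not directly available. The paper instead writes $x=z+y$ with $y(t)=S(t)\varphi(0)$ for $t\ge 0$, $y=\varphi$ on $(-\infty,0]$, and seeks $z$ in the Banach space $\mathcal{B}_{DI}^0=\{z\in\mathcal{B}_{DI}:z_0=0\}$ with norm $\sup_{0\le t\le T}(\E\|z(t)\|^2)^{1/2}$. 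Both $\Pi_1 z$ and $\Pi_2 z$ then vanish on $(-\infty,0]$, the ball $\mathcal{B}_k=\{\|z\|^2_{\mathcal{B}_{DI}^0}\le k\}$ is closed, bounded and convex in a genuine Banach space, and the phase-space estimate $\|z_t+y_t\|^2_{\mathcal{B}_h}\le 4l^2(k+M\E\|\varphi(0)\|^2)+4\|\varphi\|^2_{\mathcal{B}_h}=:q'$ feeds directly into the invariance and contraction bounds. With this reduction in place, the remainder of your outline (invariance forced by \eqref{cond1} via a contradiction argument, contraction via $\nu<1$, equicontinuity plus $S(\varepsilon)$-shifted sections for relative compactness, Arzel\`a--Ascoli) is exactly what the paper carries out.
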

\begin{proof} Transform the problem(\ref{eq1}) into a fixed-point
problem. To do this, using the hypothesis $(\mathcal{H}.6)$ for an
arbitrary function $x(\cdot)$, define the control by

$$
\begin{array}{lll}
  u(t) & =&W^{-1}\{x_1-S(T)(\varphi(0)-g(0,x_0))-g(T,x_T))\\\\
   & -&\int_0^T AS(T-s)g(s,x_s)ds -\int_0^T S(T-s)f(s,x_s)ds\\\\
   & -& \int_0^T
   S(T-s)\sigma(s)dB^H(s)\}(t)-\sum_{0<t_k<T}S(T-t_k)I_k(x(t_k^-))\}(t).\\
\end{array}
$$

To formulate the controllability problem in the form suitable for
application of the Banach fixed point theorem, put the control
$u(.)$ into the stochastic control system (\ref{eqmild2}) and obtain
a non linear  operator $\Pi$ on $\mathcal{B}_{DI}$ given  by
$$ \Pi(x)(t)=\left\{
\begin{array}{ll}
& \varphi(t), \;\;\; \text {if } \; t\in(-\infty,0], \\\\
&S(t)(\varphi(0)-g(0,\varphi ))+g(t,x_t)\\ \\
&+ \int_0^t AS(t-s)g(s,x_s)ds +\int_0^t S(t-s)f(s,x_s)ds\\ \\
&+\int_0^t S(t-s)Bu(s)ds+\int_0^t S(t-s)\sigma(s)dB^H(s),\;\;\;\\
\\
 &+{\sum}_{0<t_k<t}S(t-t_k)I_k(x(t_k^-)),\;\;\; \text {if }\;
 t\in[0,T].
\end{array}\right.
$$

 Then it is clear that to prove the existence of mild
solutions to equation (\ref{eq1}) is equivalent to find a fixed
point for the operator $\Pi$. Clearly, $\Pi x(T)=x_1$, which means
that the control $u$ steers the system from the initial state
$\varphi$ to $x_1$ in time $T$, provided we can obtain a fixed point
of the operator $\Pi$ which implies that the system in controllable.

 Let $y:(-\infty,T]\longrightarrow X$ be the function defined by
 $$
y(t)=\left\{\begin{array}{ll}
       \varphi(t), & \text {if } \; t\in(-\infty,0], \\
       S(t)\varphi(0), & \text {if }\; t\in[0,T],
     \end{array}\right.
 $$
then, $y_0=\varphi$. For each function $z\in \mathcal{B}_{DI}$, set
$$
x(t)=z(t)+y(t).
$$

It is obvious that $x$ satisfies the stochastic control system
(\ref{eqmild2}) if and only if $z$ satisfies $z_0=0$ and

\begin{equation}\label{eqmild2}
\begin{array}{ll}
z(t)=&g(t,z_t+y_t)-S(t)g(0,\varphi ))+ \int_0^t
AS(t-s)g(s,z_s+y_s)ds
\\\\
&+\int_0^t S(t-s)f(s,z_s+y_s)ds+\int_0^t S(t-s)Bu_{z+y}(s)ds\\\\
&+\int_0^t S(t-s)\sigma(s)dB^H(s),\;\;\;\\
\\
 &+{\sum}_{0<t_k<t}S(t-t_k)I_k(z(t_k^-)+y(t_k^-)).
\end{array}
\end{equation}

Set
$$
\mathcal{B}_{DI}^0=\{z\in \mathcal{B}_{DI}: z_0=0\};
$$
for any $z\in B_{DI}^0$, we have
$$
\|z\|_{\mathcal{B}_{DI}^0}=\|z_0\|_{\mathcal{B}_h}+
\sup_{t\in[0,T]}(\E\|z(t)\|^2)^{\frac{1}{2}}=\sup_{t\in[0,T]}(\E\|z(t)\|^2)^{\frac{1}{2}}.
$$
Then, $(\mathcal{B}_{DI}^0,\|.\|_{\mathcal{B}_{DI}^0})$ is a Banach
space. Define the operator $\widehat{\Pi}:
\mathcal{B}_{DI}^0\longrightarrow \mathcal{B}_{DI}^0$ by

\begin{equation}
(\widehat{\Pi}z)(t)=\left\{\begin{array}{ll}
&0\;\;\text {if } \; t\in(-\infty,0], \\\\
&g(t,z_t+y_t)-S(t)g(0,\varphi ))+ \int_0^t AS(t-s)g(s,z_s+y_s)ds \\\\
&+\int_0^t S(t-s)f(s,z_s+y_s)ds+\int_0^t S(t-s)Bu_{z+y}(s)ds\\\\
&+\int_0^t S(t-s)\sigma(s)dB^H(s),\;\;\;\\
\\
 &+{\sum}_{0<t_k<t}S(t-t_k)I_k(z(t_k^-)+y(t_k^-)),\;\;\;\;\text {if } \;
 t\in[0,T],
\end{array}\right.
\end{equation}

where
 $$
\begin{array}{ll}
  u_{z+y}(t)
  &=W^{-1}\{x_1-S(T)(\varphi(0)-g(0,z_0+y_0))-g(T,z_T+y_T))\\\\
    &-\int_0^T AS(T-s)g(s,z_s+y_s)ds -\int_0^T
    S(T-s)f(s,z_s+y_s)ds\\\\
    & -\int_0^T
   S(T-s)\sigma(s)dB^H(s)\}(t)-\sum_{0<t_k<T}S(T-t_k)I_k(z(t_k^-)+y(t_k^-))\}(t).\\
\end{array}
$$

Set
$$
\mathcal{B}_k=\{z\in \mathcal{B}_{DI}^0:
\|z\|_{\mathcal{B}_{DI}^0}^2\leq k\}, \qquad \text{ for some }
k\geq0,
$$
then $\mathcal{B}_k\subseteq \mathcal{B}_{DI}^0$ is a bounded closed
convex set, and for $z\in \mathcal{B}_k$, we have
$$
\begin{array}{ll}
  \|z_t+y_t\|_{\mathcal{B}_{DI}} & \leq 2(\|z_t\|^2_{\mathcal{B}_{DI}}+\|y_t\|^2_{\mathcal{B}_{DI}})
  \\\\
   &\leq 4(l^2\sup_{0\leq s\leq t}\E\|z(s)\|^2+\|z_0\|^2_{\mathcal{B}_{h}}\\\\
   &+l^2\sup_{0\leq s\leq
   t}\E\|y(s)\|^2+\|y_0\|^2_{\mathcal{B}_{h}})\\\\
   &\leq 4l^2(k+M\E\|\varphi(0)\|^2) +4\|y\|_{\mathcal{B}_h}\\\\
   &:=q'.
\end{array}
$$
From our assumptions, using the fact that $(\sum_{i=1}^n a_i)^2 \leq
n \sum_{i=1}^n a_i^2 $ for any positive real numbers $a_i$,
$i=1,2,...,n,$, we have
\begin{equation}\label{estim0}
\begin{array}{ll}
  \E\|u_{z+y}\|^2 \leq&
  8M_w\{\|x_1\|^2+M\E\|\varphi(0)\|^2+2Mc_1^2M_g\|y\|^2_{\mathcal{B}_h}\\\\
   & +2(c_1^2+\frac{(M_{1-\beta}T^\beta)^2}{2\beta-1})[M_gq'+\overline{M}_g]+2MT^2[M_fq'+\overline{M_f}]\\ \\
   &+2MT^{2H-1}\int_0^T\|\sigma(s)\|^2_{\mathcal{L}_2^0}ds+mM\sum_{k=1}^m\widetilde{M}_k\}:=\mathcal{G}.
\end{array}
\end{equation}
Noting that
\begin{equation}\label{estim1}
\begin{array}{ll}
  \E\|u_{z+y}-u_{v+y}\|^2 & \leq 4M_w\{(c_1^2+\frac{(M_{1-\beta}T^\beta)^2}{2\beta-1})M_g+MT^2M_f\\ \\
   &+mM\sum_{k=1}^m M_k\} \|z_t-v_t\|_{\mathcal{B}_h}^2.\\
\end{array}
\end{equation}
 It is clear that the operator $\Pi$ has a fixed point if
and only if $\widehat{\Pi}$ has one, so it turns to prove that
$\widehat{\Pi}$ has a fixed point. To this end, we decompose
$\widehat{\Pi}$ as $\widehat{\Pi}=\Pi_1+\Pi_2$, where $\Pi_1$ and
$\Pi_2$ are defined on $\mathcal{B}_{DI}^0$, respectively by
\begin{equation} (\Pi_1z)(t)=\left\{\begin{array}{ll}
&0\;\;\text {if } \; t\in(-\infty,0], \\\\
&g(t,z_t+y_t)-S(t)g(0,\varphi ))+ \int_0^t
AS(t-s)g(s,z_s+y_s)ds\\\\
&+\int_0^t S(t-s)\sigma(s)dB^H(s),\;\;\;\;\text {if } \; t\in[0,T],
\end{array}\right.
\end{equation}
\\
and \\
 \begin{equation}
(\Pi_2z)(t)=\left\{\begin{array}{ll}
&0\;\;\text {if } \; t\in(-\infty,0], \\\\
& \int_0^t S(t-s)f(s,z_s+y_s)ds+\int_0^t
S(t-s)Bu_{z+y}(s)ds\\\\
&+{\sum}_{0<t_k<t}S(t-t_k)I_k(z(t_k^-)+y(t_k^-)),\;\;\;\;\text {if }
\; t\in[0,T].
\end{array}\right.
\end{equation}

In order to apply the Karasnoselskii fixed point theorem for the
operator $\widehat{\Pi}$, we prove the following assertions:
\begin{enumerate}
\item $\Pi_1(x)+\Pi_2(x)\in \mathcal{B}_k$ whenever $x\in \mathcal{B}_k$,
  \item   $\Pi_1$ is a contraction;
  \item $\Pi_2$ is continuous and  compact map.
  \end{enumerate}

For the sake of convenience,  the proof will be given in several
steps.

Step 1: We claim that there exists a positive number $k$, such that
$\Pi_1(x)+\Pi_2(x)\in \mathcal{B}_k$ whenever $x\in \mathcal{B}_k$.
If it is not true, then for each positive number $k$, there is a
function $z^k(.)\in\mathcal{B}_k$, but  $\Pi_1(z^k)+\Pi_2(z^k)\notin
\mathcal{B}_k$, that is $\E\|\Pi_1(z^k)(t)+\Pi_2(z^k)(t)\|^2>k$ for
some $t\in [0,T].$  However, on the other hand, we have
$$
\begin{array}{ll}
  k & <\E \|\Pi_1(z^k)\Pi_2(z^k)(t)\|^2 \\\\
   & \leq 7\{2Mc_1^2(M_g\|y\|_{\mathcal{B}_h}^2+\overline{M}_g)+2(c_1^2q'+\overline{M}_g)
   +2\frac{(M_{1-\beta}T^\beta)^2}{2\beta-1}[M_gq'+\overline{M}_g]\\\\
   &+MM_bT^2\mathcal{G} +2MT^2(M_fq'+\overline{M_f})+2MT^{2H-1}\int_0^T\|\sigma(s)\|^2_{\mathcal{L}_2^0}ds+ M\sum_{k=1}^m \widetilde{M}_k\}\\\\
   &\leq
   7(1+8MM_bM_wT^2)\{2Mc_1^2(M_g\|y\|_{\mathcal{B}_h}^2+\overline{M}_g)+
   2(c_1^2+\frac{(M_{1-\beta}T^\beta)^2}{2\beta-1})[M_gq'+\overline{M}_g])\\\\
   &+2MT^2[M_fq'+\overline{M_f}]+2MT^{2H-1}\int_0^T\|\sigma(s)\|^2_{\mathcal{L}_2^0}ds+mM\sum_{k=1}^m\widetilde{M}_k\}\\\\
   &+ 8MM_bM_wT^2(\|x_1\|^2+M\E\|\varphi(0)\|^2)\\\\
   &\leq \overline{K}+
   7(1+8MM_bM_wT^2)\{c_1^2+2\frac{(M_{1-\beta}T^\beta)^2}{2\beta-1})M_gq'+2MT^2M_fq'\},
\end{array}
$$
 where $\overline{K}$ is independent of $k$. Dividing both sides by
 $k$ and taking the limit as $k\longrightarrow\infty$, we get
 $$
7l^2(1+8MM_bM_wT^2)\{8(c_1^2+\frac{(M_{1-\beta}T^\beta)^2}{2\beta-1})M_g+8MT^2M_f\}\geq1.
 $$

 This contradicts  (\ref{cond1}). Hence for some positive $k$,
 $$
(\Pi_1+\Pi_2) (\mathcal{B}_k)\subseteq \mathcal{B}_k.
 $$

Step 2: $\Pi_1$ is a contraction.\\
Let $t\in[0,T]$ and $z^1, z^2\in\mathcal{B}_{DI}^0$
$$
 \begin{array}{ll}
             \E\|(\Pi_1z^1)(t)-(\Pi_1z^2)(t) \|^2& \leq 2\E\|g(t,z_t^1+y_t)-g(t,z_t^2+y_t)\|^2\\ \\
              & +2\E\|\int_0^tAS(t-s)(g(s,z^1_s+y_s)-g(s,z^2_s+y_s))\|^2
              \\\\
              & \leq 2M_g\|(-A)^{-\beta}\|^2\|z_s^1-z_s^2\|^2_{\mathcal{B}}\\ \\
              & +2T\int_0^t\frac{M^2_{1-\beta}}{(t-s)^{2(1-\beta)}}M_g
              \|z_s^1-z_s^2\|^2_{\mathcal{B}}\\\\
              & \leq  2M_g\left\{\|(-A)^{-\beta}\|^2+\frac{(M_{1-\beta}T^\beta)^2}{(2\beta-1)}\right\}(2l^2\sup_{0\leq s\leq T}\\\\
              &\E\|z^1(s)-z^2(s)\|^2 +2(\|z^1_0\|^2_{\mathcal{B}}+\|z^2_0\|^2_{\mathcal{B}})
              \\\\
              & \leq \nu\sup_{0\leq s\leq T}\E\|z^1(s)-z^2(s)\|^2)
              \qquad ( \text{ since }\;z^1_0=z^2_0=0)
           \end{array}
$$
Taking supremum over $t$,
$$
\|(\Pi_1z^1)(t)-(\Pi_1z^2)(t) \|_{\mathcal{B}_{DI}^0}\leq
\nu\|z^1-z^2\|_{\mathcal{B}_{DI}^0},
$$
where
$$
\nu=4M_gl^2(\|(-A)^{-\beta}\|^2+\frac{(M_{1-\beta}T^\beta)^2}{2\beta-1})<1.
$$
Thus $\Pi_1$ is a contraction on $\mathcal{B}_{DI}^0$.

Step 3: $\Pi_2$ is completely continuous $\mathcal{B}_{DI}^0$.
\begin{enumerate}
  \item $\Pi_2$ is  continuous on $\mathcal{B}_{DI}^0$.\\

  Let $z^n$ be a sequence such that $z^n\longrightarrow z$ in
  $\mathcal{B}_{DI}^0$. Then, there exists a number $k>0$ such that $\|z^n(t)\|\leq k$, for all $n$ and a.e.
    $t\in[0,T]$, so $z^n\in \mathcal{B}_k$ and $z\in \mathcal{B}_k$.
    In view  of \ref{estim1}, we have

\begin{equation*}
\begin{array}{ll}
  \E\|u_{z+y}-u_{v+y}\|^2 & \leq 8k M_w\{(c_1^2+\frac{(M_{1-\beta}T^\beta)^2}{2\beta-1})M_g+MT^2M_f\\ \\
   &+mM\sum_{k=1}^m M_k\} .\\
\end{array}
\end{equation*}
   By  hypothesis $H1-H4$, we have
   \begin{itemize}
     \item [(i)] $I_k$, $k=1,2,...,m$ is continuous.\\
     \item [(ii)] $f(t,z_t^n+y_t)\longrightarrow f(t,z_t+y_t)$ for
     each $t\in[0,T]$.\\
     \item [(iii)] $g(t,z_t^n+y_t)\longrightarrow g(t,z_t+y_t)$ for
     each $t\in[0,T]$.\\
     \item [(iv)] $AS(T-s)g(s,z_s^n+y_s)\longrightarrow  AS(T-s)g(s,z_s+y_s)$ for
     each $s\in[0,T]$.
   \end{itemize}
We have by the dominated convergence theorem,
$$
\begin{array}{ll}
  \E\|(\Pi_2z)^n(t)-(\Pi_2z)(t)\|^2 & \leq 3\{\E\|\int_0^tS(t-s)B[z^n(s)-z(s)]ds\|^2\\ \\
   & +\E\|\int_0^tS(t-s)[f(s,z_s^n+y_s)-f(s,z_s+y_s)]ds\|^2 \\\\
   & +\E\|\sum_{0\leq t_k\leq t}
   S(t-t_k)[I_k(z^n({t_k^-})+y({t_k^-}))-I_k(z(t_k^-)+y(t_k^-))]\|^2\}
   \\\\
  & \leq 3MM_bT\int_0^t\E\|z^n(s)-z(s)\|^2 ds\\\\
  &+3MT\int_0^t\E\|f(s,z_s^n+y_s)-f(s,z_s+y_s)\|^2ds\\ \\ \\
   & +3mM\sum_{k=1}^m\E\|I_k(z^n({t_k^-})+y({t_k^-}))-I_k(z({t_k^-})+y({t_k^-}))\|^2\\\\
   &\longrightarrow 0 \text{ as } n\longrightarrow\infty.
\end{array}
$$

Thus, $\Pi_2$ is continuous.

  \item $\Pi_2$ maps  $\mathcal{B}_k$ into equicontinuous  family.\\
  Let $z\in \mathcal{B}_k$ and $\tau_1,\,\tau_2\in[0,T]$, $\tau_1,\,\tau_2\neq
  t_k$, k=1,...,m,  we have
  $$
  \begin{array}{ll}
    \E\|(\Pi_2z)(\tau_2)-(\Pi_2z)(\tau_1)\|^2 \leq & 6\E\|\int_0^{\tau_1} (S(\tau_2-s)-S(\tau_1-s))f(s,z_s+y_s)ds\|^2
    \\\\
     & +6\E\|\int_0^{\tau_1} (S(\tau_2-s)-S(\tau_1-s))Bu(s)ds\|^2\\\\
     &+6\E\|\sum_{0<t_k<\tau_1} (S(\tau_2-t_k)-S(\tau_1-t_k))
     I_k(z({t_k^-})+y({t_k^-}))\|^2 \\\\
     & +6\E\|\int_{\tau_1}^{\tau_2} S(\tau_2-s)f(s,z_s+y_s)ds\|^2\\\\
     &+6\E\|\int_{\tau_1}^{\tau_2} S(\tau_2-s))Bu(s)ds\|^2
     \\\\
     & +6\E\|\sum_{\tau_1<t_k<\tau_2} S(\tau_2-t_k)
     I_k(z({t_k^-})+y({t_k^-}))\|^2.
  \end{array}
  $$

By the inequality \ref{estim0}  and H\"{o}lder inequality, we get
$$
 \begin{array}{ll}
    \E\|(\Pi_2z)(\tau_2)-(\Pi_2z)(\tau_1)\|^2 \leq & 6T\E\|\int_0^{\tau_1}
    \|S(\tau_2-s)-S(\tau_1-s)\|^2h_{q'}(s)ds
    \\\\
     & +6TM_b\mathcal{G}\int_0^{\tau_1} \|S(\tau_2-s)-S(\tau_1-s)\|^2ds\\\\
     &+6m \sum_{0<t_k<\tau_1} \|S(\tau_2-t_k)-S(\tau_1-t_k)\|^2\widetilde{M}_k  \\\\
     & +6T\int_{\tau_1}^{\tau_2} \|S(\tau_2-s)\|^2h_{q'}(s)ds+6TM_b\mathcal{G}\int_{\tau_1}^{\tau_2}
     \|S(\tau_2-s)\|^2ds
     \\\\
     & +6Mm\sum_{\tau_1<t_k<\tau_2}  \widetilde{M}_k.
  \end{array}
  $$
  The right-hand side is independent of $z\in \mathcal{B}_k$ and tends to zero
  as $\tau_2-\tau_1\longrightarrow0,$ since the compactness of
  $S(t)$ for $t>0$ implies the continuity in the uniform operator
  topology. Thus, $\Pi_2$ maps $\mathcal{B}_k$ into an equicontinuous family
  of   functions. The equicontinuities for the cases $\tau_1<\tau_2\leq0$
  and $\tau_1<0<\tau_2$ are obvious.\\

  \item $(\Pi \mathcal{B}_q)(t)$ is precompact set in $X$.\\
  Let $0<t\leq T$ be fixed,  $0<\epsilon<t$,  for $z\in \mathcal{B}_k$,  we define

  $$
  \begin{array}{ll}
    (\Pi_{2,\epsilon}z)(t)= & S(\epsilon)\int_0^{t-\epsilon}S(t-s-\epsilon)f(s,z_s+y_s)ds+S(\epsilon)\int_0^{t-\epsilon}S(t-s-\epsilon)Bu(s)ds\\\\
     & +S(\epsilon)\sum_{0<t_k<t-\epsilon}S(t-t_k-\epsilon)I_k(z({t_k^-})+y({t_k^-})).
  \end{array}
  $$
Using the estimation (\ref{estim0}) as above and by the compactness
of $S(t)$ $(t>0)$, we obtain
$V_\epsilon(t)=\{(\Pi_{2,\epsilon}z)(t):\; z\in \mathcal{B}_k\}$  is
relative compact in $X$ for every $\epsilon$, $0<\epsilon<t$.
Moreover, for every $z\in \mathcal{B}_k$, we have
$$
\begin{array}{ll}
    \E\|\Pi_{2}z)(t)-\Pi_{2,\epsilon}z)(t)\|^2\leq&  3T\int_{t-\epsilon}^t\|S(t-s)\|^2\E\|f(s,z_s+y_s)\|^2ds
    \\\\
   & +3TM_b\mathcal{G}\int_{t-\epsilon}^t\|S(t-s)\|^2ds \\\\
   & +3m
   \sum_{t-\epsilon<t_k<t}\|S(t-t_k)\|^2\E\|I_k(z({t_k^-})+y({t_k^-}))\|^2\\\\
   &\leq
   3TM\int_{t-\epsilon}^th_{q'}(s)ds+3TM_b\mathcal{G}M\epsilon\\\\
   &+3mM\sum_{t-\epsilon<t_k<t}\widetilde{M}_k.
\end{array}
$$

Therefore,
$$
\E\|\Pi_{2}z)(t)-\Pi_{2,\epsilon}z)(t)\|^2\longrightarrow0,\qquad
\text{ as } \epsilon\longrightarrow 0^+,
$$
and there are  precompact sets arbitrarily close to the set
$V(t)=\{(\Pi_2z)(t):\; z\in B_k\}$, hence the set $V(t)$ is also
precompact in $X$.

Thus, by Arzela-Ascoli theorem $\Pi_2$ is a compact operator. These
arguments enable us to conclude that $\Pi_2$  is completely
continuous, and by the fixed point theorem of Karasnoselskii there
exists a fixed point $z(.)$ for $\widehat{\Pi}$ on $\mathcal{B}_k$.
If we define $x(t)=z(t)+y(t),$ $-\infty<t\leq T$, it is easy to see
that $x(.)$ is a mild solution  of (\ref{eq1}) satisfying
$x_0=\varphi$, $x(T)=x_1$. Then the proof is complete.
\end{enumerate}

\end{proof}

\section{Example}

 To illustrate the previous result, we consider the following   impulsive neutral stochastic partial differential equation with infinite
 delays,
driven by a fractional Brownian motion of the form\\
   \small{
   \begin{equation}\label{eq1part}
 \left\{\begin{array}{llll}
\frac{\partial}{\partial
t}[v(t,\xi)-\int_{-\infty}^0T(v(\theta,\xi))G(t,\xi,\theta-t)d\theta]=[\frac{\partial^2}{\partial^2\xi}
v(t,\xi)+c(\xi)u(t)\\\\+\int_{-\infty}^0Q(v(\theta,\xi))F(t,\xi,\theta-t)d\theta]
 +\sigma (t)\frac{dB^H(t)}{dt},\quad 0\leq t\leq T,\, t\neq t_k,\, 0\leq \xi\leq\pi\\\\
\Delta
v(t_k,\xi)=\int_{-\infty}^{t_k}\alpha_k(t_k^--s)r(v(s,\xi))ds,\;\quad
k=1,2,...,m;\\\\
v(t,0)=v(t,\pi)=0,\quad \quad 0\leq t\leq T,\\\\
v(s,\xi)=\varphi(s,\xi) ,\,\;;-\infty< s \leq 0\quad 0\leq
\xi\leq\pi,
\end{array}\right.
\end{equation}
} where   $B^H(t)$ is  cylindrical  fractional Brownian motion,
$\varphi: [-\tau,0]\times[0,\pi]\longrightarrow\R$ is a given
continuous function such that $\varphi(s,.)\in L^2([0,\pi])$ is
measurable and satisfies $\E\|\varphi\|^2<\infty.$

We rewrite (\ref{eq1part}) into abstract form of (\ref{eq1}). We
take  $X=Y=U=L^2([0,\pi])$. Define the operator $A:D(A)\subset
X\longrightarrow X$ given  by $A=\frac{\partial^2}{\partial^2\xi}$
with
$$
D(A)=\{y\in X:\,y' \mbox{ is absolutely continuous},  y''\in X,\quad
y(0)=y(\pi)=0\},
$$
then we get
$$
Ax=\sum_{n=1}^\infty n^2<x,e_n>_Xe_n,\quad x\in D(A),
$$
 where $
e_n:=\sqrt{\frac{2}{\pi}}\sin nx,\; n=1,2,.... $
 is  an orthogonal  set of eigenvector of $-A$.\\

 The bounded linear operator $(-A)^{\frac{3}{4}}$ is given by
 $$
(-A)^{\frac{3}{4}}x=\sum_{n=1}^\infty
n^\frac{3}{2}<x,e_n>_Xe_n,\quad
 $$
 with domain
 $$
 D((-A)^{\frac{3}{4}})=X_{\frac{3}{4}}=\{x\in X,\sum_{n=1}^\infty
n^\frac{3}{2}<x,e_n>_Xe_n\in X \},\,\text{ and }
\|(-A)^{\frac{3}{4}}\|=1.
$$
 It is well known that $A$ is the infinitesimal generator of an
analytic semigroup $\{S(t)\}_{t\geq 0}$ in $X$, and is given by (see
\cite{pazy})
$$ S(t)x=\sum_{n=1}^{\infty}e^{-n^2t}<x,e_n>e_n,
$$
for $x\in X$ and $t\geq0$.  Since the semigroup $\{S(t)\}_{t\geq 0}$
is analytic, there exists a constant $M>0$  such that
$\|S(t)\|^2\leq M$ for every $t\geq0$. In other words, the condition
$(\mathcal{H}.1)$ holds.

\ni Further for  the operator $B: U\longrightarrow X$ is a bounded
linear operator defined by
  $
Bu(t)(\xi)=c(\xi)u(t),\;0\leq \xi \leq\pi, \, u\in L^2([0,T], U),
  $ and    the operator $W:L^2([0,T],
U)\longrightarrow X $ given by
  $$
W u(\xi)=\int_0^TS(T-s)c(\xi)u(t)ds,\;\;0\leq \xi \leq\pi,
  $$
  $W$ is a bounded linear operator but not necessarily one-to-one.
  Let
  $$Ker \,W=\{x\in L^2([0,T],U),  \; W x=0\}
  $$
   be the null space of
  $W$ and $[Ker\, W]^\bot$ be its orthogonal complement in
  $L^2([0,T],U)$. Let $\widetilde{W}: [Ker\, W]^\bot\longrightarrow Range(W)
  $ be the restriction of $W$ to $[Ker\, W]^\bot$, $\widetilde{W}$ is
  necessarily one-to-one operator. The inverse mapping theorem  says that
  $\widetilde{W}^{-1}$ is bounded since $[Ker\, W]^\bot$ and $Range
  (W)$ are Banach spaces. So that  $W^{-1}$ is bounded and takes values in  $L^2([0,T],U)\setminus Ker\, W$,  hypothesis  $(\mathcal{H}.6)$ is satisfied.

We choose the phase function $h(s)=e^{4s}$, $s<0$, then
$l=\int_{-\infty}^0h(s)ds=\frac{1}{4}<\infty$, and the abstract
phase space $\mathcal{B}_h$ is Banach space with the norm
 $$
\|\varphi\|_{\mathcal{B}_h}=\int_{-\infty}^0h(s)\sup_{\theta\in[s,0]}(\E\|\varphi(\theta)\|^2)^\frac{1}{2}ds.
$$

To rewrite the initial-boundary value problem (\ref{eq1part}) in the
abstract form we assume the following:

For $(t,\varphi)\in[0,T]\times\mathcal{B}_h$, where
$\varphi(\theta)(\xi)=\varphi(\theta,\xi),$
$(\theta,\xi)\in(-\infty,0]\times[0,\pi]$, we put
$v(t)(\xi)=v(t,\xi)$

\begin{align*}
 g(t,\varphi)(\xi)&=\int_{-\infty}^0T(\varphi(\theta)(\xi))G(t,\xi,\theta-t)d\theta,\\\\
  f(t,\varphi)(\xi)&=
  \int_{-\infty}^0Q(\varphi(\theta)(\xi))F(t,\xi,\theta-t)d\theta.
\end{align*}

The above equation can be written in the abstract form (\ref{eq1}).

We now assume that the functions $\alpha_k:\R\longrightarrow\R,$
$k=1,2,...,m$ are continuous and
$\widetilde{M}_k=\int_{-\infty}^{t_k}h(s)\alpha_k^2(s)ds<\infty$.
Then the condition $(\mathcal{H}.4)$  is satisfied if $r(.)$ is
Lipschitz continuous. To verify the conditions $(\mathcal{H}.2)$ and
$(\mathcal{H}.3)$, we suppose further that
\begin{enumerate}
  \item[(i)] the function $F(t,\xi,\theta)$ is continuous in
  $[0,T]\times[0,\pi]\times(-\infty,0]$ and satisfies
  $$
\int_{0}^\pi(\int_{-\infty}^0|F(t,\xi,\theta)|d\theta)^2<\infty.
  $$
  \item [(ii)] the function $Q(.)$ is continuous and $\E
  Q^2(\varphi(\theta)(\xi))\leq \|\varphi\|^2_{\mathcal{B}_h}$, for
  all $(\theta,\xi)\in(-\infty,0]\times[0,\pi]$.
  \item [(iii)] the function $Q(.)$ is continuous and $\E\|
  Q(\varphi_1(\theta)(\xi))-Q(\varphi_2(\theta)(\xi))\|^2\leq\|\varphi_1-\varphi_2\|_{\mathcal{B}_h}^2.$
\end{enumerate}

We can see from $(i)$ and $ (ii)$ that
$$
\E\|F(t,\varphi)\|^2\leq
\int_{0}^\pi(\int_{-\infty}^0|F(t,\xi,\theta)|d\theta)^2d\xi\|\varphi\|_{\mathcal{B}_h}^2,
$$
which implies that  $(\mathcal{H}.2)$ is satisfied. Moreover

$$
g([0,T]\times\mathcal{B}_h)\subseteq D((-A)^{\frac{3}{4}})
$$
and
$$
\E\|(-A)^{\frac{3}{4}}g(t,\varphi_1)-(-A)^{\frac{3}{4}}g(t,\varphi_2)\|^2\leq
M_g\|\varphi_1-\varphi_2\|_{\mathcal{B}_h}^2,
$$
for some constant $M_g$ depending on $T$ and $G$. Further, other
assumptions are satisfied such that
$$
\frac{7}{2}(1+8MM_bM_wT^2)\{(1+2M^2_{\frac{1}{4}}T^{\frac{3}{2}}))M_g+MT^2M_f\}<1.
$$

In order to define the operator $Q:
Y:=L^2([0,\pi],\R)\longrightarrow Y$, we choose a sequence
$\{\lambda_n\}_{n\in\N}\subset \R^+$, set $Qe_n=\lambda_ne_n$, and
assume that
$$
tr(Q)=\sum_{n=1}^\infty \sqrt{\lambda_n}<\infty.
$$
 Define the fractional Brownian motion in $Y$ by
$$
B^H(t)=\sum_{n=1}^\infty \sqrt{\lambda_n}\beta^H(t)e_n,
$$
where $H\in(\frac{1}{2},1)$ and $\{\beta^H_n\}_{n\in\N}$ is a
sequence of one-dimensional fractional Brownian motions mutually
independent. Let us assume the  function
$\sigma:[0,+\infty)\rightarrow
\mathcal{L}_2^0(L^2([0,\pi]),L^2([0,\pi]))$ satisfies
 $$\int_0^T\|\sigma(s)\|^2_{\mathcal{L}_2^0}ds< \infty,\;\; \forall T>0.
  $$

Then all the assumptions of Theorem \ref{th1} are satisfied.
Therefore, we conclude that the system (\ref{eq1part}) is
controllable on $(-\infty,T]$.




\end{document}